\newtheorem{theorem}{Theorem}[section]
\newtheorem{lemma}[theorem]{Lemma}
\newtheorem{corollary}[theorem]{Corollary}
\theoremstyle{definition}
\newtheorem{definition}[theorem]{Definition}
\theoremstyle{remark}
\newtheorem{remark}[theorem]{Remark}
\numberwithin{equation}{section}
\newcommand{\ket}[1]{ | #1  \rangle}
\newcommand{\bra}[1]{ \langle #1  |}
\newcommand{\braket}[2]{ \left\langle #1  | #2  \right\rangle}
\newcommand{\ii}{\mathbf{i_1}}
\newcommand{\iii}{\mathbf{i_2}}
\newcommand{\jj}{\mathbf{j}}
\newcommand{\ee}{\mathbf{e_1}}
\newcommand{\eee}{\mathbf{e_2}}
\newcommand{\e}[1]{\mathbf{e_{#1}}}
\newcommand{\ik}[1]{\mathbf{i_{#1}}}
\newcommand{\M}{\mathbb{M}}
\newcommand{\C}{\mathbb{C}}
\newcommand{\D}{\mathbb{D}}
\newcommand{\R}{\mathbb{R}}
\newcommand{\nc}{\mathcal{NC}}
\renewcommand{\(}{\left(}
\renewcommand{\)}{\right)}
\newcommand{\oa}{\left\{}
\newcommand{\fa}{\right\}}
\renewcommand{\[}{\left[}
\renewcommand{\]}{\right]}
\renewcommand{\P}[2]{P_{#1}( #2 )}
\newcommand{\bo} {\ensuremath{{\bf i_1}}}
\newcommand{\bt} {\ensuremath{{\bf i_2}}}
\newcommand{\eo} {\ensuremath{{\bf e_1}}}
\newcommand{\et} {\ensuremath{{\bf e_2}}}
\newcommand{\h}[1]{{\widehat{#1}}}
\newcommand{\hh}{{\widehat{1}}}
\newcommand{\hhh}{{\widehat{2}}}
\newcommand{\mC}{\ensuremath{\mathbb{C}}}
\newcommand{\mo}{\mathbf{1}}
\newcommand{\mt}{\mathbf{2}}
\newcommand{\mk}{\mathbf{k}}
\newcommand{\scalarmath}[2]{\( #1, #2 \)}
\begin{document}
\vspace{4cm}
\begin{center} \LARGE{\textbf{Infinite Dimensional Bicomplex Spectral Decomposition Theorem}}
\end{center}
\vspace{1cm}

\begin{center} \bf{K.S. Charak$^{1}$, R. Kumar$^{2}$ and D. Rochon$^{3}$}
\end{center}

\smallskip

\begin{center}{$^{1}$ Department of Mathematics, University of Jammu,\\
Jammu-180 006, INDIA.\\
E-mail: kscharak7@rediffmail.com }
\end{center}

\begin{center}{$^{2}$ Department of Mathematics, University of Jammu,\\
Jammu-180 006, INDIA.\\
E-mail: ravinder.kumarji@gmail.com }
\end{center}

\begin{center} {$^{3}$ D\'epartement de math\'ematiques et d'informatique,\\
Universit\'e du Qu\'ebec \`a Trois-Rivi\`eres, C.P. 500 Trois-Rivi\`eres, Qu\'ebec, Canada G9A 5H7.  \\
E-mail: Dominic.Rochon@UQTR.CA,\\
Web: www.3dfractals.com}
\end{center}

\medskip
\begin{abstract}
 This paper presents a bicomplex version of the Spectral Decomposition Theorem on infinite dimensional bicomplex Hilbert spaces. In the process, the ideas of bounded linear operators, orthogonal complements and compact operators on bicomplex Hilbert spaces are introduced and treated in relation with the classical Hilbert space $M'$ imbedded in any bicomplex Hilbert space $M$.
\end{abstract}

\noindent \textbf{Keywords: }Bicomplex numbers, Bicomplex Algebras, Hilbert Spaces, Compact Operators, Spectral Decomposition Theorem.\\
\noindent \textbf{AMS [2010]: }Primary 16D10; Secondary 30G35, 46C05, 46C50.

\newpage
\section{Introduction}
Hilbert spaces over the field of complex numbers are indispensable
for mathematical structure of quantum mechanics \cite{JVN} which in
turn play a great role in molecular, atomic and subatomic
phenomena. The work towards the generalization of quantum
mechanics to bicomplex number system have been recently a topic in
different quantum mechanical models \cite{CV, CVM, BK, Rochon2, Rochon3}.
More specifically, in \cite{GMR2,GMR} the authors made an in depth study of bicomplex
Hilbert spaces and operators acting on them. After obtaining reasonable
results responsible for investigations on finite and infinite
dimensional bicomplex Hilbert spaces and applications to quantum
mechanics \cite{GMR3, GMR4, GeRo, MMR}, they in \cite{GMR} asked for extension of Riesz-Fischer
Theorem and Spectral Theorem on infinite dimensional Hilbert spaces. Recently,
the authors \cite{RKC} have obtained the bicomplex analogue of the Riesz-Fischer
Theorem on infinite dimensional bicomplex Hilbert spaces. They proved that every separable
bicomplex Hilbert space is isometrically isomorphic to the bicomplex analogue of $l^2$.
In this article we obtain a bicomplex version of the Infinite Dimensional Spectral
Decomposition Theorem using bicomplex eigenvalues.

\section{Preliminaries}

This section first summarizes a number of known results
on the algebra of bicomplex numbers, which will be needed
in this paper.  Much more details as well as proofs can
be found in~\cite{Price, Rochon1, Rochon2, Rochon3}.
Basic definitions related to bicomplex modules and scalar
products are also formulated as in~\cite{GMR2, Rochon3}, but
here we make no restrictions to finite dimensions following definitions of \cite{GMR}.


\subsection{Bicomplex Numbers}\label{Bicomplex Numbers}
The set of bicomplex numbers
\begin{align}
\M(2):=\{ w=z_1+z_2\mathbf{i_2}~|~z_1,z_2\in\mathbb{C}(\mathbf{i_1}) \},
\label{2.1}
\end{align}
is a four dimensional algebra extending the reals by two
independent and commuting square roots of $-1$ denoted by $\ii$ and
$\iii$. The product of $\ii$ and $\iii$ defines a hyperbolic unit
$\jj$ such that $\mathbf{j}^2=1$.
They are a particular case of the so-called \textit{Multicomplex Numbers} (denoted $\M(n)$) \cite{Price, GR} and \cite{Vaijac}. In fact, bicomplex numbers $$\M(2)\cong {\rm Cl}_{\Bbb{C}}(1,0) \cong {\rm Cl}_{\Bbb{C}}(0,1)$$
are unique among the complex Clifford algebras (see \cite{BDS,DSS} and \cite{Ryan}) in the sense that this set form a commutative, but not division algebra. An important subset of $\M(2)$ is $\mathbb{D}:= \{ x+y\jj~|~x,y\in\mathbb{R}\}$.
There are three natural involutions of bicomplex numbers, given by conjugations on $\mC (\bo)$ and $\mC (\bt)$ together with their composition. For the sake of notations, we put them as $w^{\dag_1}:=\bar{z}_1+\bar{z}_2\mathbf{i_2};$ $w^{\dag_2}:=z_1-z_2\mathbf{i_2}$
and $w^{\dag_3}:=\bar{z}_1-\bar{z}_2\mathbf{i_2}$. For each conjugation there is a corresponding modulus: $\sqrt{w\cdot w^{\dag_k}}$ for $k=1,2,3$ \cite{Rochon1}. Each of the sets $\mathbb{C}(\ik{k})$ is isomorphic to the field of complex numbers, while $\mathbb{D}\cong {\rm Cl}_{\Bbb{R}}(0,1)$ is the set of so-called \emph{hyperbolic numbers}, also called duplex numbers (see, e.g. \cite {Sob}, \cite {Rochon1}).

The operations of the bicomplex algebra are considerably simplified by
the introduction of two bicomplex numbers $\ee$
and $\eee$ defined as
\begin{equation}
\ee:=\frac{1+\jj}{2},\qquad\eee:=\frac{1-\jj}{2}.\label{2.10}
\end{equation}
In fact $\ee$ and $\eee$ are hyperbolic numbers.
They make up the so-called \emph{idempotent basis}
of the bicomplex numbers. One easily checks that ($k=1,2$)
\begin{equation}
\mathbf{e}_{\mathbf{1}}^2=\ee,
\quad \mathbf{e}_{\mathbf{2}}^2=\eee,
\quad \ee+\eee=1,
\quad \mathbf{e}_{\mathbf{k}}^{\dag_3}=\e{k} ,
\quad \ee\eee=0. \label{2.11}
\end{equation}
Any bicomplex number $w$ can be written uniquely as
\begin{equation}
w = z_1+z_2\iii = z_\hh \ee + z_\hhh \eee , \label{2.12}
\end{equation}
where
\begin{equation}
z_\hh= z_1-z_2\ii \quad \mbox{and}
\quad z_\hhh= z_1+z_2\ii \label{2.12a}
\end{equation}
both belong to $\mathbb{C}(\ii)$.  Note that
\begin{equation}
|w| = \frac{1}{\sqrt{2}}
\sqrt{|z_\hh |^2 + |z_\hhh |^2} \, . \label{norm7}
\end{equation}
The caret notation ($\hh$ and $\hhh$) will be used systematically in
connection with idempotent decompositions, with the
purpose of easily distinguishing different types
of indices.  As a consequence of~\eqref{2.11}
and~\eqref{2.12}, one can check that if
$\sqrt[n]{z_\hh}$ is an $n$th root of $z_\hh$
and $\sqrt[n]{z_\hhh}$ is an $n$th root of $z_\hhh$,
then $\sqrt[n]{z_\hh} \, \ee + \sqrt[n]{z_\hhh} \, \eee$
is an $n$th root of $w$.

The uniqueness of the idempotent decomposition allows the introduction of two projection operators as
\begin{align}
P_1: w \in\M(2)&\mapsto z_\hh \in\C(\ii),\label{2.14}\\
P_2: w \in\M(2)&\mapsto z_\hhh \in\C(\ii).\label{2.15}
\end{align}
The $P_k$ ($k = 1, 2$) satisfies
\begin{equation}
[P_k]^2=P_k, \qquad P_1\ee+P_2\eee=\mathbf{Id}, \label{2.16}
\end{equation}
and, for $s,t\in\M(2)$,
\begin{equation}
P_k(s+t)=P_k(s)+P_k(t),
\qquad P_k(s\cdot t)=P_k(s)\cdot P_k(t) .\label{2.17}
\end{equation}

The product of two bicomplex numbers $w$ and $w'$
can be written in the idempotent basis as
\begin{align}
w \cdot w' = (z_\hh \ee + z_\hhh \eee)
\cdot (z'_\hh \ee + z'_\hhh \eee)
= z_\hh z'_\hh \ee + z_\hhh z'_\hhh \eee .\label{2.20}
\end{align}
Since 1 is uniquely decomposed as $\ee + \eee$,
we can see that $w \cdot w' = 1$ if and only if
$z_\hh z'_\hh = 1 = z_\hhh z'_\hhh$.  Thus $w$ has an inverse
if and only if $z_\hh \neq 0 \neq z_\hhh$, and the
inverse $w^{-1}$ is then equal to
$(z_\hh)^{-1} \ee + (z_\hhh)^{-1} \eee$.  A nonzero $w$ that
does not have an inverse has the property that
either $z_\hh = 0$ or $z_\hhh = 0$, and such a $w$ is
a zero divisor.  Zero divisors make up the
so-called \emph{null cone} $\nc$.  That terminology comes
from the fact that when $w$ is written as in~\eqref{2.1},
zero divisors are such that $z_1^2 + z_2^2 = 0$.
Any hyperbolic number can be written in the
idempotent basis as $x_\hh \ee + x_\hhh \eee$, with
$x_\hh$ and $x_\hhh$ in~$\R$.  We define the set~$\D_+$
of positive hyperbolic numbers as
\begin{equation}
\D_+:= \{ x_\hh \ee + x_\hhh \eee ~|~ x_\hh, x_\hhh \geq 0 \}.
\label{2.21}
\end{equation}
Since $w^{\dag_3} = \bar{z}_\hh \ee + \bar{z}_\hhh \eee$,
it is clear that $w \cdot w^{\dag_3} \in \D_+$ for any
$w$ in $\M(2)$.

\subsection{$\M(2)$-Module and Scalar Product}\label{Module}
The set of bicomplex numbers is a commutative ring.
A module~$M$ defined over the ring of bicomplex numbers is called an
$\M(2)$-\emph{module}~\cite{Rochon3, GMR2, GMR}.

Let $M$ be an $\M(2)$-module. For $k=1, 2$, we define $V_k$
as the set of all elements of the form $\e{k} \ket{\psi}$,
with $\ket{\psi} \in M$.  Succinctly, $V_1:=\eo M$
and $V_2:=\et M$. In fact, $V_k$ is a vector space over $\C(\ii)$ and any element $\ket{v_k} \in V_k$ satisfies
$\ket{v_k} = \e{k} \ket{v_k}$ for $k=1,2$.
It was shown in~\cite{GMR2}
that if~$M$ is a finite-dimensional free $\M(2)$-module, then
$V_1$ and~$V_2$ have the same dimension.

For any $\ket{\psi}\in M$, there exist a unique decomposition
\begin{align}
\ket{\psi} = \ket{v_1}
+ \ket{v_2}, \label{2.31}
\end{align}
where $v_k\in V_k$, $k=1,2$.

It will be useful to rewrite \eqref{2.31} as
\begin{align}
\ket{\psi} = \ket{\psi_\mo}
+ \ket{\psi_\mt} , \label{2.33}
\end{align}
where
\begin{align}
\ket{\psi_\mo} := \eo\ket{\psi}  && \text{and} && \ket{\psi_\mt} := \et\ket{\psi} .\label{2.34}
\end{align}

In fact, the $\M(2)$-module $M$ can be viewed as a vector space $M'$
over $\mC(\bo)$, and $M'=V_1\oplus V_2.$ From a set-theoretical point of view, $M$ and $M'$ are
identical.  In this sense we can say, perhaps improperly,
that the \textbf{module} $M$ can be decomposed into the
direct sum of two vector spaces over $\mC(\bo)$, i.e.\
$M=V_1\oplus V_2.$

\subsubsection{Bicomplex Scalar Product}\label{bicomplex sc}

A \emph{bicomplex scalar product} maps two arbitrary kets
$\ket{\psi}$ and $\ket{\phi}$ into a bicomplex number
$(\ket{\psi}, \ket{\phi})$, so that the following
always holds ($s \in \M(2)$):
\begin{enumerate}
\item $(\ket{\psi}, \ket{\phi} + \ket{\chi})
=(\ket{\psi}, \ket{\phi}) + (\ket{\psi}, \ket{\chi})$;
\item $(\ket{\psi}, s \ket{\phi})
= s (\ket{\psi},\ket{\phi})$;
\item $(\ket{\psi}, \ket{\phi})
= (\ket{\phi}, \ket{\psi})^{\dagger_3}$;
\item $(\ket{\psi}, \ket{\psi})
=0~\Leftrightarrow~\ket{\psi}=0$.
\end{enumerate}
The bicomplex scalar product was defined in~\cite{Rochon3}
where, as in this paper, the physicists' convention is used
for the order of elements in the product.

Property $3$ implies that $(\ket{\psi}, \ket{\psi})\in\D$,
while properties 2 and 3 together imply that
$(s \ket{\psi}, \ket{\phi}) = s^{\dagger_3}
(\ket{\psi},\ket{\phi})$. However, in this work we will also require the
bicomplex scalar product $\(\cdot,\cdot\)$ to be \textit{hyperbolic
positive}, i.e.
\begin{align}
(\ket{\psi},\ket{\psi})\in\D_+,\mbox{
}\forall\ket{\psi}\in M. \label{hyperpositive}
\end{align}
This is a necessary condition if we want to recover the standard quantum mechanics from the bicomplex one (see \cite{GMR3, MMR}).
\begin{definition}
Let $M$ be a $\M(2)$-module and let $(\cdot,\cdot)$
be a bicomplex scalar product defined on $M$. The space
$\{M, (\cdot,\cdot)\}$ is called a $\M(2)$-inner product
space, or bicomplex pre-Hilbert space.  When no confusion
arises, $\{M, (\cdot,\cdot)\}$ will simply be denoted by~$M$.
\end{definition}

In this work, we will sometimes use the Dirac notation
\begin{align}
(\ket{\psi},\ket{\phi})=\braket{\psi}{\phi} \label{2.37}
\end{align}
for the scalar product. The one-to-one correspondence between \emph{bra} $\bra{\cdot}$ and
\emph{ket} $\ket{\cdot}$ can be established from the Bicomplex Riesz Representation Theorem \cite[Th. 3.7]{GMR}.
As in \cite{GeRo}, subindices will be used inside the ket notation. In fact, this is simply a convenient way to deal with the Dirac notation in $V_1$ and $V_2$.
Note that the following projection of a bicomplex scalar product:
\begin{equation}
(\cdot,\cdot)_{\widehat{k}}:=P_k((\cdot,\cdot)):M\times M\longrightarrow \mC(\bo)
\end{equation}
is a \textbf{standard scalar product} on $V_k$, for $k=1,2$. One can easily show \cite{GMR} that
\begin{align}
(\ket{\psi}, \ket{\phi})
&= \ee\P{1}{(\ket{\psi_\mo}, \ket{\phi_\mo})}
+ \eee\P{2}{(\ket{\psi_\mt}, \ket{\phi_\mt})}\\
&=\ee\scalarmath{\ket{\psi_\mo}}{\ket{\phi_\mo}}_\hh+\eee\scalarmath{\ket{\psi_\mt}}{\ket{\phi_\mt}}_\hhh.\\\label{2.36}
&=\ee\braket{\psi_\mo}{\phi_\mo}_\hh+\eee\braket{\psi_\mt}{\phi_\mt}_\hhh.
\end{align}

We point out that a bicomplex scalar product is
\textbf{completely characterized} by the two standard
scalar products $\scalarmath{\cdot}{\cdot}_{\widehat{k}}$ on $V_k$.
In fact, if $\scalarmath{\cdot}{\cdot}_{\widehat{k}}$
is an arbitrary scalar product on $V_k$, for $k=1,2$,
then $\scalarmath{\cdot}{\cdot}$ defined as in \eqref{2.36}
is a bicomplex scalar product on $M$.

From this scalar product, we can define a \textbf{norm}
on the vector space $M'$:
\begin{align}
\big{|}\big{|}\ket{\phi}\big{|}\big{|}
&:= \frac{1}{\sqrt{2}}
\sqrt{\scalarmath{\ket{\phi_\mo}} {\ket{\phi_\mo}}_{\widehat{1}}
+ \scalarmath{\ket{\phi_\mt}}{\ket{\phi_\mt}}_{\widehat{2}}} \notag\\
&=\frac{1}{\sqrt{2}} \sqrt{ \big{|}\ket{\phi_\mo}\big{|}^{2}_{1}
+ \big{|}\ket{\phi_\mt}\big{|}^{2}_{2}} \, .
\label{T-norm}
\end{align}
Here we wrote
\begin{equation}
\big{|}\ket{\phi_\mk }\big{|}_{k}
= \sqrt{\scalarmath{\ket{\phi_\mk}}
{\ket{\phi_\mk}}_{\widehat{k}}} \, ,
\label{normk1}
\end{equation}
where $|\cdot|_k$ is the natural induced norm on~$V_k$.
Moreover,
\begin{equation}
\big{|}\big{|}\ket{\phi}\big{|}\big{|}
= \frac{1}{\sqrt{2}}
\sqrt{\scalarmath{\ket{\phi_\mo}} {\ket{\phi_\mo}}_{\widehat{1}}
+ \scalarmath{\ket{\phi_\mt}}{\ket{\phi_\mt}}_{\widehat{2}}}
= \big{|} \sqrt{\scalarmath{\ket{\phi}}{\ket{\phi}}} \big{|}.
\label{T-norma}
\end{equation}

\begin{definition}
Let $M$ be an $\M(2)$-module and let $M'$ be the associated
vector space. We say that $\|\cdot\|:M\longrightarrow \mathbb{R}$
is a \textbf{$\M(2)$-norm} on $M$ if the following holds:

\smallskip\noindent
1. $\|\cdot\|:M'\longrightarrow \mathbb{R}$ is a norm;\\
2. $\big{\|}w\cdot \ket{\psi}\big{\|}\leq \sqrt{2}
\big{|}w\big{|}\cdot\big{\|}\ket{\psi}\big{\|}$,
$\forall w\in\M(2)$, $\forall \ket{\psi}\in M$.
\label{norm}
\end{definition}
\noindent A $\M(2)$-module with a \textbf{$\M(2)$-norm} is called a
\textbf{normed $\M(2)$-module}. It is easy to check that $\|\cdot\|$ in \eqref{T-norm} is a \textbf{$\M(2)$-norm} on $M$
and that the $\M(2)$-module $M$ is \textbf{complete} with respect
to the following metric on $M$:
\begin{equation}
d(\ket{\phi},\ket{\psi})=\big{|}\big{|}\ket{\phi}-\ket{\psi}\big{|}\big{|}
\end{equation}
if and only if $V_1$ and $V_2$ are complete (see \cite{GMR}).
\begin{definition}
A bicomplex Hilbert space is a $\M(2)$-inner product space $M$ which is complete with respect to the induced $\M(2)$-norm \eqref{T-norm}.
\label{Hilbert}
\end{definition}

\section{Bounded Linear Operators on Bicomplex Hilbert Spaces}

\begin{theorem}
Let $M$ be a bicomplex Hilbert space and $T:M\longrightarrow M $  be a bicomplex linear operator. Then $T$ is continuous if and only if $T$ is bounded.
\label{bounded}
\end{theorem}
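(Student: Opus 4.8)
The plan is to reduce the statement to the classical equivalence between continuity and boundedness of a linear operator on a normed vector space, exploiting two facts from the preliminaries: first, that $M$, as a set, coincides with the $\mC(\bo)$-vector space $M'$ (Section \ref{Module}), on which $\|\cdot\|$ of \eqref{T-norm} is an honest norm (part 1 of Definition \ref{norm}); and second, that the metric $d$ generating the topology of $M$ is precisely the one induced by this norm. Since a bicomplex linear $T$ is in particular additive and $\mC(\bo)$-homogeneous (because $\mC(\bo)\subset\M(2)$), it is an ordinary linear operator on the normed space $(M',\|\cdot\|)$, so ``continuous'' and ``bounded'' may both be read in this classical sense, the latter meaning that there is a real constant $C>0$ with $\|T\ket{\psi}\|\le C\,\|\ket{\psi}\|$ for all $\ket{\psi}\in M$.

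First I would dispatch the easy direction, bounded $\Rightarrow$ continuous. Given such a $C$, linearity yields $\|T\ket{\psi}-T\ket{\phi}\|=\|T(\ket{\psi}-\ket{\phi})\|\le C\,\|\ket{\psi}-\ket{\phi}\|=C\,d(\ket{\psi},\ket{\phi})$, so $T$ is uniformly Lipschitz and hence continuous. No bicomplex subtlety enters here.

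For the converse, continuous $\Rightarrow$ bounded, I would invoke continuity at the origin, noting $T(0)=0$. Taking $\varepsilon=1$ produces a $\delta>0$ such that $\|\ket{\psi}\|\le\delta$ forces $\|T\ket{\psi}\|\le 1$. For an arbitrary nonzero $\ket{\psi}$ I would rescale by the positive real $r=\delta/\|\ket{\psi}\|$, so that $\|r\ket{\psi}\|=\delta$ and therefore $\|T(r\ket{\psi})\|\le 1$; pulling $r$ out of $T$ and out of the norm then gives $\|T\ket{\psi}\|\le\delta^{-1}\|\ket{\psi}\|$, i.e. boundedness with $C=\delta^{-1}$.

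The only point where the bicomplex structure genuinely intrudes — and the step I expect to require the most care — is this rescaling. The $\M(2)$-norm axiom (part 2 of Definition \ref{norm}) only furnishes the inequality $\|w\ket{\psi}\|\le\sqrt2\,|w|\,\|\ket{\psi}\|$ for a general bicomplex scalar $w$, which is too weak to drive the scaling argument, since the spurious $\sqrt2$ would accumulate. The resolution is to scale only by a positive real number $r$: because $\|\cdot\|$ is a bona fide norm on the $\mC(\bo)$-vector space $M'$, it is exactly homogeneous over $\mC(\bo)$, and in particular $\|r\ket{\psi}\|=r\,\|\ket{\psi}\|$ while $\|T(r\ket{\psi})\|=\|r\,T\ket{\psi}\|=r\,\|T\ket{\psi}\|$ with no extra factor. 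Thus the $\sqrt2$ of the $\M(2)$-norm axiom never appears, and the classical argument carries over verbatim.
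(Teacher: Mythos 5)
Your proposal is correct, but it takes a genuinely different route from the paper's. The paper argues through the idempotent decomposition: it introduces the projections $T_\mk\ket{\phi} := \e{k}T(\ket{\phi})$, shows that a bound $K$ for $T$ yields $\big\|T_\mk\ket{\phi_\mk}\big\| \le K \big\|\ket{\phi_\mk}\big\|$ on each complex normed space $(V_k,\|\cdot\|)$, invokes the classical continuity--boundedness equivalence componentwise, and recombines via $T = T_\mo + T_\mt$ (and argues the converse symmetrically, from continuity of the projections $T_\mk$). You instead treat $M$ wholesale as the complex normed space $(M',\|\cdot\|)$: since $\mC(\bo)\subset\M(2)$, a bicomplex linear operator is in particular $\mC(\bo)$-linear on $M'$ (this is the easy direction of the paper's later Lemma \ref{operator}), and ``continuous'' and ``bounded'' have identical meanings on $M$ and $M'$ because the underlying set, the norm and the metric all coincide; you then run the classical scaling argument directly. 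Your key observation --- that one only ever rescales by positive reals, for which the norm on $M'$ is exactly homogeneous, so the factor $\sqrt{2}$ permitted in axiom 2 of Definition \ref{norm} never accumulates --- is precisely the point that makes this reduction legitimate. Your route is the more economical one, and it is in fact the viewpoint the paper itself endorses in the remark immediately following the theorem (``all the results concerning normed vector spaces are automatically true for bicomplex Hilbert spaces''); what the paper's componentwise proof buys instead is the machinery of the projections $T_\mk$ acting on $V_k$, which is reused throughout the rest of the paper, for the adjoint, for compactness (Lemma \ref{compact_id}), and in the proof of the spectral decomposition theorem itself.
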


\begin{proof}
Suppose $T$ is bounded. Then
\begin{equation}
\big{\|}T( \ket{\phi}) \big{\|}
\le K  \big{\|} \ket{\phi} \big{\|}, \;
\forall \ket{\phi} \in M.  \notag
\end{equation}
Define the projection $T_\mk:M\longrightarrow V_k$ as
\begin{equation}
T_\mk \ket{\phi}
:=  \e{k}T(\ket{\phi}), \;
\forall \ket{\phi} \in M, \; k=1,2. \notag
\end{equation}
Then
\begin{equation}
\big{\|} T_\mk \ket{\phi_\mk} \big{\|}
=\big{\|} \e{k}T(\ket{\phi_\mk}) \big{\|}
\leq \big{\|} T(\ket{\phi_\mk}) \big{\|}
\leq K \big{\|} \ket{\phi_\mk} \big{\|},
\; \forall \ket{\phi_\mk}\in V_k\subset M.
\end{equation}	
Thus $T_\mk$ is bounded on the normed vector space $(V_k,\|\cdot\|)$ for each $k=1,2$.  Therefore $T_\mk$ is continuous for each $k=1,2$. Hence $T=T_\mo + T_\mt$ is continuous.

Conversely, suppose that $T$ is continuous. Then $T_\mo$ and $T_\mt$ being projections of $T$ are continuous. So $T_\mo$ and $T_\mt$ are bounded and therefore $T=T_\mo + T_\mt$ is bounded.
\end{proof}
\begin{remark}
In fact, since a normed $\M(2)$-module $M$ can be viewed as a normed vector space $M'$ over $\mC(\bo)$, all the results concerning normed vector spaces are automatically true for bicomplex Hilbert spaces. In particular, if we denote by $B(M)$ the space of the bicomplex bounded linear operators $T:M\rightarrow M$ with
\begin{equation}
\big{\|}T\big{\|}:=sup\{\big{\|}T(\ket{\phi})\big{\|}: \ket{\phi}\in M, \big{\|} \ket{\phi}\big{\|}\leq 1\},
\end{equation}
then the normed $\M(2)$-module $\{B(M),\big{\|}\cdot\big{\|}\}$ can also be viewed as a normed vector space  $B'(M)\subset B(M')$ over $\mC(\bo)$. Furthermore, if $S:M\rightarrow M$ and $T:M\rightarrow M$ are bicomplex bounded linear operators, then
\begin{equation}
\big{\|}T\circ S\big{\|}\leq \big{\|}T\big{\|}\big{\|}S\big{\|}.
\end{equation}
\end{remark}

By the Bicomplex Riesz Representation Theorem \cite{GMR}, the concept of adjoint operator is well defined over infinite
dimensional bicomplex Hilbert space. However, since the norm cannot be represented directly with a bicomplex scalar product, the standard proof to show that: ``the adjoint operator of a bounded operator is bounded" fail in that case. To obtain a similar result in the bicomplex space, we need to proceed with $M'$.

\begin{lemma}
Let $M$ be a bicomplex Hilbert space. An operator $T:M'\longrightarrow M'$ is a linear operator over the complex Hilbert space $M'$ with the following property: $$T({\bf e_k}\ket{\phi})={\bf e_k} T(\ket{\phi})$$ $\forall \ket{\phi}\in M$ and $k=1,2$ if and only if
$T:M\longrightarrow M$  is a bicomplex linear operator over $M$.
\label{operator}
\end{lemma}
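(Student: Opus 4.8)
The plan is to reduce everything to the idempotent decomposition of bicomplex numbers. The governing observation is that, by~\eqref{2.12}, every scalar $w\in\M(2)$ is written uniquely as $w=z_\hh\ee+z_\hhh\eee$ with $z_\hh,z_\hhh\in\C(\ii)$, and that $\C(\ii)$ sits inside $\M(2)$ as a subring, namely the numbers with $z_2=0$. Thus the $\M(2)$-homogeneity of an additive map is completely determined by two pieces of data: its $\C(\ii)$-homogeneity, and the way it interacts with multiplication by the two idempotents $\ee$ and $\eee$. The lemma is precisely the statement that these two pieces of data, taken together, are equivalent to full bicomplex linearity. Since $M$ and $M'$ coincide as sets, additivity is literally the same condition on both sides, so in each direction only the homogeneity clauses require attention.

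For the implication from the $M'$ description to bicomplex linearity, I would take an arbitrary $w\in\M(2)$ and $\ket{\psi}\in M$, expand $w=z_\hh\ee+z_\hhh\eee$, and compute $T(w\ket{\psi})$ step by step: first split $w\ket{\psi}=z_\hh\ee\ket{\psi}+z_\hhh\eee\ket{\psi}$ and apply additivity; then pull out the scalars $z_\hh,z_\hhh\in\C(\ii)$ using the assumed $\C(\ii)$-linearity; then move each idempotent across $T$ via the hypothesis $T(\e{k}\ket{\phi})=\e{k}T(\ket{\phi})$; and finally reassemble $z_\hh\ee T(\ket{\psi})+z_\hhh\eee T(\ket{\psi})=wT(\ket{\psi})$. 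Together with additivity this is exactly $\M(2)$-homogeneity, so $T$ is a bicomplex linear operator.

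The converse is a restriction-of-scalars argument and is the shorter half. If $T$ is $\M(2)$-linear, then because $\C(\ii)\subset\M(2)$ the homogeneity relation $T(\lambda\ket{\psi})=\lambda T(\ket{\psi})$ holds in particular for every $\lambda\in\C(\ii)$; combined with additivity this makes $T$ a $\C(\ii)$-linear operator on $M'$. Moreover, since $\ee,\eee\in\M(2)$, the commutation property $T(\e{k}\ket{\phi})=\e{k}T(\ket{\phi})$ is simply the special case $w=\e{k}$ of $\M(2)$-homogeneity, so it holds automatically.

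I do not expect a genuine obstacle here; the content is the idempotent bookkeeping rather than any analytic difficulty, and no completeness or norm estimates enter, so the statement is purely algebraic. The one point worth flagging is conceptual rather than technical: mere $\C(\ii)$-linearity on $M'$ does not by itself force $T$ to respect the splitting $M'=V_1\oplus V_2$ arising from the module structure, so the commutation hypothesis with $\ee$ and $\eee$ cannot be dropped. It is exactly this hypothesis that, through the decomposition $w=z_\hh\ee+z_\hhh\eee$, upgrades complex-linearity to bicomplex linearity.
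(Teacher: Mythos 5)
Your proposal is correct and follows essentially the same route as the paper: the forward direction is the identical computation expanding $w=z_\hh\ee+z_\hhh\eee$, using additivity, $\C(\ii)$-homogeneity, and the commutation hypothesis $T(\e{k}\ket{\phi})=\e{k}T(\ket{\phi})$ to reassemble $wT(\ket{\psi})$, and the converse is the same restriction-of-scalars observation. If anything, your converse is slightly more explicit than the paper's (which leaves the commutation property as ``obvious''), and your closing remark that the idempotent-commutation hypothesis cannot be dropped is a worthwhile observation the paper does not make.
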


\begin{proof}
From a set-theoretical point of view, $M$ and $M'$ are identical. So, the operator $T$ is automatically well defined over $M$, but we have to prove that $T$ is $\M(2)$-linear. Hence, we have that
\begin{equation}
T(\ket{\phi}+\ket{\psi})=T(\ket{\phi})+T(\ket{\psi})
\end{equation}
and
\begin{equation}
T(z\ket{\phi})=zT(\ket{\phi})
\end{equation}
$\forall \ket{\phi}, \ket{\psi}\in M$ and $\forall z\in\mathbb{C}(\mathbf{i_1})$.
Now, let $w\in\M(2)$ be an arbitrary bicomplex number. Hence, $w=z_\mo \eo + z_\mt \et$ with $z_\mo, z_\mt\in\mathbb{C}(\mathbf{i_1})$, and
\begin{eqnarray}
T(w\ket{\phi}) &=& T(w\ket{\phi})\\
               &=& T((z_\mo \eo + z_\mt \et)\ket{\phi})\\
               &=& T(z_\mo \eo \ket{\phi}) + T(z_\mt \et \ket{\phi})\\
               &=& z_\mo T(\eo \ket{\phi}) + z_\mt T(\et \ket{\phi})\\
               &=& z_\mo \eo T(\ket{\phi}) + z_\mt \et T( \ket{\phi})\\
               &=& w T(\ket{\phi}).
\end{eqnarray}
Conversely, since $\mathbb{C}(\mathbf{i_1})\subset\M(2)$, it is obvious that a bicomplex linear operator on $M$ is automatically a linear operator on $M'$.
\end{proof}

\begin{lemma}
Let $M$ be a bicomplex Hilbert space. Let $T:M\longrightarrow M$ be a bicomplex linear operator over $M$. Then the
adjoint operator $T^*$ for $M$ is the same than the adjoint operator on $M'$.
\label{adjoint}
\end{lemma}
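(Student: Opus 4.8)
The plan is to show that the bicomplex adjoint $T^*$, which exists by the Bicomplex Riesz Representation Theorem and is characterized by $(\ket{\psi},T\ket{\phi})=(T^*\ket{\psi},\ket{\phi})$ for all $\ket{\psi},\ket{\phi}\in M$, already obeys the defining relation of the classical Hilbert-space adjoint on $M'$; uniqueness of that adjoint then forces the two to coincide. First I would pin down the scalar product that $M'$ carries. Since the $\M(2)$-norm \eqref{T-norm} is induced, by polarization, from the complex scalar product $(\ket{\psi},\ket{\phi})_{M'}:=\frac{1}{2}\big(\braket{\psi_\mo}{\phi_\mo}_\hh+\braket{\psi_\mt}{\phi_\mt}_\hhh\big)$ on the complex Hilbert space $M'=V_1\oplus V_2$, it is with respect to $(\cdot,\cdot)_{M'}$ that the $M'$-adjoint, say $T^\dagger$, is defined.

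The key step is the elementary identity $(\ket{\psi},\ket{\phi})_{M'}=\frac{1}{2}\big(P_1\big((\ket{\psi},\ket{\phi})\big)+P_2\big((\ket{\psi},\ket{\phi})\big)\big)$, valid for all kets, which is immediate from the idempotent form \eqref{2.36} of the bicomplex scalar product together with the definitions \eqref{2.14}--\eqref{2.15} of $P_1,P_2$. In words, the complex inner product on $M'$ is nothing but the arithmetic mean of the two idempotent components of the bicomplex scalar product.

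With this identity in hand the conclusion is short. For arbitrary $\ket{\psi},\ket{\phi}$ I would apply it to $(T^*\ket{\psi},\ket{\phi})$, replace $(T^*\ket{\psi},\ket{\phi})$ by $(\ket{\psi},T\ket{\phi})$ using the bicomplex adjoint relation, and apply the identity once more to recover $(T^*\ket{\psi},\ket{\phi})_{M'}=(\ket{\psi},T\ket{\phi})_{M'}$. Since this is exactly the property characterizing the adjoint of $T$ on the complex Hilbert space $M'$, non-degeneracy of $(\cdot,\cdot)_{M'}$ (equivalently, uniqueness of the Hilbert-space adjoint) yields $T^*=T^\dagger$, the two adjoints being equal as maps on the common underlying set $M=M'$.

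I expect the difficulty here to be bookkeeping rather than conceptual. The one point to handle with care is the factor $\frac{1}{2}$: it appears on both sides of the computation and therefore cancels, so any other normalization of $(\cdot,\cdot)_{M'}$ would leave the adjoint unchanged. This is reassuring, since the remark preceding the lemma warns that the norm is not directly representable through the bicomplex scalar product, but that obstruction concerns \emph{boundedness} of the adjoint and does not affect the present identity. As a cross-check I would also carry out the equivalent component-wise argument: projecting the adjoint relation by $P_1$ and $P_2$ separately shows that $T^*$ restricted to each $V_k$ is the $\braket{\cdot}{\cdot}_{\h{k}}$-adjoint of $T|_{V_k}$ --- legitimate because $T$ and $T^*$ preserve $V_1$ and $V_2$ by Lemma \ref{operator} --- and the same holds for $T^\dagger$, giving the coincidence on each summand.
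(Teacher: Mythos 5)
Your argument is correct, but it runs in the opposite direction from the paper's. The paper is constructive: it first writes $T$ in idempotent form $T(\ket{\phi}) = T_\mo(\ket{\phi_\mo}) + T_\mt(\ket{\phi_\mt})$, takes the classical adjoints $T_\mk^*$ of the component operators on the complex Hilbert spaces $V_k$, and \emph{defines} $T^*(\ket{\phi}) := T_\mo^*(\ket{\phi_\mo}) + T_\mt^*(\ket{\phi_\mt})$; it then checks that this operator satisfies the bicomplex adjoint identity on $M$ (by recombining the two component identities with weights $\ee$ and $\eee$) and that the same operator is the adjoint on $M'$ for the scalar product $\braket{\psi}{\phi}' = \frac{1}{2}\bigl(\braket{\psi_\mo}{\phi_\mo}_{\hh} + \braket{\psi_\mt}{\phi_\mt}_{\hhh}\bigr)$, which is exactly the product you identify by polarization. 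You instead take the bicomplex adjoint $T^*$ as already given (via the bicomplex Riesz representation theorem, which the paper invokes just before Lemma \ref{operator}) and push its defining identity through the averaging map $\frac{1}{2}(P_1+P_2)$, then appeal to uniqueness of the classical adjoint on $M'$. Both proofs hinge on the same computational fact, namely that the $M'$ product is the arithmetic mean of the two idempotent components of the bicomplex product, so your key identity is sound; and your closing component-wise cross-check is essentially the paper's construction. The trade-off: your route is shorter and cleanly separates existence (Riesz) from identification (uniqueness), while the paper's route produces the explicit formula \eqref{T^*} for $T^*$ in terms of $T_\mo^*$ and $T_\mt^*$, which is not a disposable by-product: it is cited later, in the proof of the spectral decomposition theorem, to conclude that self-adjointness of $T$ forces self-adjointness of each $T_\mk$. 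If your proof were adopted, that component formula should still be recorded separately (it follows at once from your cross-check).
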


\begin{proof}
Let us work with the idempotent decomposition. Using the projections $T_\mo$ and $T_\mt$, we have that:
\begin{equation}
T(\ket{\phi})=T_\mo(\ket{\phi_\mo})+T_\mt(\ket{\phi_\mt})\mbox{ }\forall \ket{\phi}\in M
\label{T}
\end{equation}
where $T_\mk$ is a linear operator on the complex Hilbert space $V_k$ for $k=1,2$.
Now, let $T_\mk^*$ be the adjoint operator of $T$ on $V_k$, for $k=1,2$, and let's define:
\begin{equation}
T^*(\ket{\phi})=T_\mo^*(\ket{\phi_\mo})+T_\mt^*(\ket{\phi_\mt})\mbox{ }\forall \ket{\phi}\in M.
\label{T^*}
\end{equation}
Since the range of $T_\mk^*$ is in $V_k$ for $k=1,2$, it is clear that $T^*$ is a \textbf{bicomplex linear operator} on $M$.
In fact, by definition, for any $\ket{\psi_{\mk}}, \ket{\phi_{\mk}}\in V_k$ we have that
\begin{equation}
\scalarmath{\ket{\psi_\mk }} {T_\mk \ket{\phi_\mk}}_{\widehat{k}}
= \scalarmath{T_\mk^* \ket{\psi_\mk}} {\ket{\phi_\mk}}_{\widehat{k}} \mbox{ for } k=1,2.
\end{equation}
Hence,
\begin{equation}
\scalarmath{\ket{\psi}} {T \ket{\phi}}= \scalarmath{T^* \ket{\psi}} {\ket{\phi}}
\end{equation}
where
\begin{eqnarray}
\scalarmath{\ket{\psi}}{\ket{\phi}} &=& \braket{\psi}{\phi}\notag \\
&=& \braket{\psi_\mo} {\phi_\mo}_{\widehat{1}}\eo+\braket{\psi_\mt} {\phi_\mt}_{\widehat{2}}\et
\end{eqnarray}
for all $\ket{\psi}, \ket{\phi} \in M$ \cite{GMR}.
Hence, $T^*$ is the adjoint operator of $T$ on $M$. Moreover, by Lemma \ref{operator}, we know that
$T$ is also a \textbf{linear operator} on $M'$. So, if we define the scalar product on $M'$ in this way:
\begin{eqnarray}
\scalarmath{\ket{\psi}}{\ket{\phi}}' &=& \braket{\psi}{\phi}'\notag \\
                                     &:=& \frac{1}{2}\[\braket{\psi_\mo} {\phi_\mo}_{\widehat{1}}+ \braket{\psi_\mt} {\phi_\mt}_{\widehat{2}}\],
\end{eqnarray}
we have that $T^*$ is also the adjoint operator on $M'$ and the norm on $M$ is the same than the norm on $M'$.
\end{proof}

\noindent Using this new approach, we have the following result.

\begin{theorem}
The abjoint operator $T^{*}$ of a bounded operator on a bicomplex Hilbert space $M$ is bounded. Moreover, we have
$\big{\|}T\big{\|}=\big{\|}T^{*}\big{\|}$ and $\big{\|}T^{*}T\big{\|}=\big{\|}T\big{\|}^2$.
\label{adjoint}
\end{theorem}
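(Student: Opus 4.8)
The plan is to reduce the statement to the associated complex Hilbert space $M'$, where all three assertions are classical, and then transport them back to $M$. The preceding two lemmas are designed precisely to make this reduction legitimate: by the previous lemma the map $T^{*}$ serving as the adjoint of $T$ on $M$ is \emph{the very same map} as the adjoint of $T$ regarded as a bounded operator on the ordinary complex Hilbert space $(M',\scalarmath{\cdot}{\cdot}')$, and the $\M(2)$-norm $\big{\|}\cdot\big{\|}$ on $M$ agrees with the Hilbert-space norm that $\scalarmath{\cdot}{\cdot}'$ induces on $M'$. This identification is the crux of the whole approach, because---as remarked just before Lemma \ref{operator}---the $\M(2)$-norm is not the modulus of any bicomplex scalar product, so the usual Hilbert-space argument cannot be carried out inside $M$ directly.

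With this in hand I would first note that $T$, being bounded on $M$, is by Theorem \ref{bounded} together with the norm identification a bounded linear operator on the genuine complex Hilbert space $M'$. The standard theory of adjoints on a complex Hilbert space then applies verbatim: $T^{*}$ is bounded, $\big{\|}T^{*}\big{\|}=\big{\|}T\big{\|}$, and the $C^{*}$-identity gives $\big{\|}T^{*}T\big{\|}=\big{\|}T\big{\|}^{2}$, all computed with respect to $\scalarmath{\cdot}{\cdot}'$. Re-reading these equalities through the norm and adjoint identifications of the previous lemma returns exactly the claimed identities on $M$, and in particular shows $T^{*}$ is bounded there.

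If one prefers to see the content explicitly rather than quoting the complex case, I would instead run the idempotent decomposition $T=T_\mo+T_\mt$, where $T_\mk$ is a bounded operator on the complex Hilbert space $V_k$. From $\big{\|}\ket{\phi}\big{\|}^{2}=\tfrac{1}{2}\(\big{|}\ket{\phi_\mo}\big{|}_{1}^{2}+\big{|}\ket{\phi_\mt}\big{|}_{2}^{2}\)$ one checks that $\big{\|}T\big{\|}=\max\{\big{\|}T_\mo\big{\|},\big{\|}T_\mt\big{\|}\}$, by testing on kets supported in a single $V_k$ to get the lower bound and estimating the general Rayleigh-type ratio by the larger of the two to get the upper bound. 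Since $\eo\et=0$ one has $T^{*}=T_\mo^{*}+T_\mt^{*}$ and $T^{*}T=T_\mo^{*}T_\mo+T_\mt^{*}T_\mt$; applying the classical identities $\big{\|}T_\mk^{*}\big{\|}=\big{\|}T_\mk\big{\|}$ and $\big{\|}T_\mk^{*}T_\mk\big{\|}=\big{\|}T_\mk\big{\|}^{2}$ on each $V_k$ and taking the maximum then reproduces both norm equalities.

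The step carrying the real content is the norm identity $\big{\|}T\big{\|}=\max\{\big{\|}T_\mo\big{\|},\big{\|}T_\mt\big{\|}\}$, equivalently the norm identification $M\cong M'$ supplied by the previous lemma; once it is granted, everything else is a formal transcription of familiar Hilbert-space facts. The only bookkeeping to watch is the factor $\tfrac{1}{\sqrt{2}}$ appearing in \eqref{T-norm}, but it enters numerator and denominator of every operator-norm quotient identically and so cancels, leaving the three identities unaffected.
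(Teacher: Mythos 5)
Your proposal is correct, and its core argument (the first two paragraphs) is exactly the paper's proof: identify $T^{*}$ on $M$ with the adjoint of $T$ on the complex Hilbert space $(M',\langle\cdot,\cdot\rangle')$ via the preceding lemma, observe that the norm of $M$ coincides with the norm induced by $\langle\cdot,\cdot\rangle'$, and quote the classical results (boundedness of the adjoint, $\|T\|=\|T^{*}\|$, and the $C^{*}$-identity) to transport all three claims back to $M$.

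Your third paragraph goes beyond what the paper does, and it is also correct: the paper never states the formula $\|T\|=\max\{\|T_\mo\|,\|T_\mt\|\}$, it simply cites the complex Hilbert space literature. Your explicit route via the idempotent decomposition is a genuine alternative that makes visible why the transported identities hold: since $T^{*}=T_\mo^{*}+T_\mt^{*}$ and $T^{*}T=T_\mo^{*}T_\mo+T_\mt^{*}T_\mt$ act componentwise on $V_1\oplus V_2$, both norm equalities reduce to the classical ones on each $V_k$ together with the elementary fact that $\max\{a^2,b^2\}=(\max\{a,b\})^2$ for $a,b\geq 0$; the factor $\tfrac{1}{\sqrt 2}$ in \eqref{T-norm} indeed cancels in every operator-norm ratio, and the lower bound $\|T\|\geq\|T_\mk\|$ obtained by testing on kets in a single $V_k$ is legitimate because $V_k\subset M$. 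One minor remark: invoking Theorem \ref{bounded} to pass from boundedness on $M$ to boundedness on $M'$ is unnecessary, since $M$ and $M'$ are the same set with the same norm, so the two notions of boundedness coincide tautologically; this does not affect the validity of the argument.
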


\begin{proof}
By the Lemma \ref{adjoint}, $T^*$ is also the adjoint operator on $M'$. Therefore, using the analogue results for the complex Hilbert spaces,
we obtain automatically that $T^*$ is continuous on $M'$. Hence, $T^*$ is also continuous on $M$. Moreover, since the norm on $M$ is the same as the norm on $M'$ although they live on the sets with different structures, we obtain also that $\big{\|}T\big{\|}=\big{\|}T^{*}\big{\|}$ and $\big{\|}T^{*}T\big{\|}=\big{\|}T\big{\|}^2$ on $M$ (see \cite[Th. 4.1.1.]{DM}).
\end{proof}

\section{Orthogonal Complements}

In this section, we explain more precisely the relationship between $M$ and $M'$. First, it is easy to show that $V_1$ is orthogonal to $V_2$ in $(M,\scalarmath{\cdot}{\cdot})$ and $(M',\scalarmath{\cdot}{\cdot}')$.
In fact, $V_1^{\bot}=V_2$. Therefore, the same symbol $\bot$ can used for $M$ and $M'$, and we have
\begin{equation}
M=V_1\oplus V_1^{\bot}=V_1\oplus V_2=M'.
\end{equation}
However, this is not the case for the subspace $V$.
Let $\{\ket{\psi_1} \dots \ket{\psi_n} \dots\}$ be a Schauder
$\M(2)$-basis associated with the bicomplex Hilbert space
$\{M, (\cdot,\cdot)\}$.  That is, any element $\ket{\psi}$
of $M$ can be written as
\begin{equation}
\ket{\psi} = \sum_{n=1}^{\infty} w_n\ket{\psi_n} ,\label{2.22}
\end{equation}
with $w_n \in \M(2)$.  As was shown in~\cite{Rochon3} for the
finite-dimensional case, an important subset $V$ of $M$ is
the set of all kets for which all $w_n$ in \eqref{2.22}
belong to $\C(\ii)$. It is obvious that $V$ is a non-empty
normed vector space over complex numbers with Schauder basis
$\{\ket{\psi_1} \dots \ket{\psi_n} \dots\}$.

From Theorem~\cite[Th. 3.11]{GMR} we see that if
$\{\ket{\psi_1} \dots \ket{\psi_n} \dots\}$ is an \textbf{orthonormal}
Schauder $\M(2)$-basis and
\begin{equation}
\sum_{n=1}^{\infty} (\ee {z_{n\hh}}+\eee {z_{n\hhh}})\ket{\psi_n}
\notag
\end{equation}
converges in $M$, then the series
\begin{equation}
\sum_{n=1}^{\infty} |\ee {z_{n\hh}}+\eee {z_{n\hhh}}|^2
\notag
\end{equation}
converges in~$\R$. In particular,
$\sum_{n=1}^{\infty} |z_{n\h{k}}|^2$ also converges. Hence
$\sum_{n=1}^{\infty} {z_{n\h{k}}}\ket{\psi_n}$ converges and this allows
to define projectors $P_1$ and $P_2$ from $M$ to $V$ as
\begin{equation*}
\P{k}{\ket{\psi}} := \sum_{n=1}^{\infty} {z_{n\h{k}}}\ket{\psi_n},
\qquad k=1, 2.
\end{equation*}
Therefore, any $\ket{\psi} \in M$ can be decomposed
uniquely as
\begin{equation}
\ket{\psi} = \ee \P{1}{\ket{\psi}}
+ \eee \P{2}{\ket{\psi}}.
\label{2.23}
\end{equation}

As in the finite-dimensional case \cite{Rochon3},
one can easily show that ket projectors
and idempotent-basis projectors (denoted with the
same symbol) satisfy the following, for $k=1,2$:
\begin{align}
\P{k}{s \ket{\psi} + t \ket{\phi}}
= \P{k}{s} \P{k}{\ket{\psi}}
+ \P{k}{t} \P{k}{\ket{\phi}} .\label{2.24}
\end{align}

\begin{definition}
Let $\{ \ket{\psi_n} \}$ be an orthonormal Schauder $\M(2)$-basis of $M$ and let $V$ be
the associated vector space.  We say that a scalar product
is $\mC(\bo)$-closed in $V$ if
$\forall \ket{\psi},\ket{\phi}\in V$ implies
$(\ket{\psi},\ket{\phi})\in\mC(\bo)$.
\end{definition}
We recall (see \cite{GMR}) that if the scalar product is $\mC(\bo)$-closed in $V$ then
the inner space $(V,||\cdot||)$ is closed in $M$. Hence, since any closed linear subspace
of a Hilbert space satisfy the Projection Theorem \cite{Hansen}, we have that
\begin{equation}
M'=V\oplus V^{\bot}
\end{equation}
when the scalar product is $\mC(\bo)$-closed under $V$. In this case, it is easy to verify that the orthogonal complement of $V$ for $(M',\scalarmath{\cdot}{\cdot}')$
is
\begin{equation}
V^{\bot}=\oa \ee \ket{\psi}- \eee \ket{\psi} : \ket{\psi}\in V\fa.
\end{equation}

This is not the case for $(M,\scalarmath{\cdot}{\cdot})$ since the orthogonal complement of $V$ is $\{0\}$. In fact, since $M$ is not a Hilbert space, the Projection Theorem cannot be applied.

\noindent Finally, using \eqref{2.23}, if we define
\begin{equation}
V_1^{\dagger_2}:=\oa  \ee \P{2}{\ket{\psi}}+ \eee \P{1}{\ket{\psi}} : \ket{\psi}\in V_1\fa=\oa \eee \P{1}{\ket{\psi}} : \ket{\psi}\in V_1\fa
\end{equation}
where $\dagger_2$ is used as the natural extension of the conjugate $\dagger_2$ in $\M(2)$,
we obtain that $V_1^{\dagger_2}=\eee V=V_2=V_1^{\bot}$ and
\begin{equation}
M=V_1\oplus V_1^{\dagger_2}=V_1\oplus V_2=M'.
\end{equation}

\begin{remark}
This definition of $\dagger_2$ is \textbf{universal} for any element inside a bicomplex Hilbert space with an orthonormal Schauder $\M(2)$-basis, and satisfy the following properties:
\begin{enumerate}
\item $(\ket{\phi}^{\dagger_2})^{\dagger_2}=\ket{\phi}$;
\item $(\ket{\phi}\pm\ket{\psi})^{\dagger_2}=\ket{\phi}^{\dagger_2}\pm\ket{\psi}^{\dagger_2}$;
\item $(w\ket{\phi})^{\dagger_2}=w^{\dagger_2}\ket{\phi}^{\dagger_2}$
\end{enumerate}
$\forall \ket{\phi},\ket{\psi}\in M$ and $\forall w\in\M(2)$.
\end{remark}

\section{Bicomplex Compact Operators}

\begin{definition}
A bicomplex linear operator $T$ on a bicomplex Hilbert space $M$ is called a \textit{bicomplex compact operator} if, for every
bounded sequence $\{\ket{\phi_n}\}$ in $M$, the sequence $\{T(\ket{\phi_n})\}$ contains a convergent subsequence.
\label{compact}
\end{definition}

We can verify easily that the collection of all bicomplex compact operators on a bicomplex Hilbert space $M$ is an $\M(2)$-module.
Moreover, since $\{M,\scalarmath{\cdot}{\cdot}\}$ and $\{M',\scalarmath{\cdot}{\cdot}'\}$ share the same topology, we obtain the following result as a direct consequence of Lemma \ref{operator}.

\begin{theorem}
Let $T$ be a bicomplex linear operator on a bicomplex Hilbert space $M$. Then $T$ is a bicomplex compact operator if and only if $T$ is a compact operator on $M'$.
\label{compact}
\end{theorem}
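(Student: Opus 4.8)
The plan is to exploit the set-theoretical identification of $M$ with $M'$ together with the fact, recorded just above, that $\{M,\scalarmath{\cdot}{\cdot}\}$ and $\{M',\scalarmath{\cdot}{\cdot}'\}$ share the same topology. First I would dispose of the linearity hypotheses, which differ in form between the two definitions: Definition~\ref{compact} requires $T$ to be a \emph{bicomplex} linear operator on $M$, whereas the classical notion of compact operator on $M'$ requires $T$ to be a $\mC(\bo)$-linear operator on the complex Hilbert space $M'$. By Lemma~\ref{operator} these two linearity conditions are equivalent (a bicomplex linear operator on $M$ is automatically $\mC(\bo)$-linear on $M'$, and conversely a $\mC(\bo)$-linear operator on $M'$ commuting with the idempotent projectors, i.e.\ satisfying $T(\e{k}\ket{\phi})=\e{k}T(\ket{\phi})$, is bicomplex linear). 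So whichever hypothesis we start from, $T$ is simultaneously the same map viewed as a bicomplex linear operator on $M$ and as a linear operator on $M'$, and no further work is needed on that front.

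The single substantive ingredient is the coincidence of the metrics. Since the $\M(2)$-norm $\|\cdot\|$ on $M$ is literally the norm on $M'$ (cf.~\eqref{T-norma} and the preceding lemma), the induced metric $d(\ket{\phi},\ket{\psi})=\|\ket{\phi}-\ket{\psi}\|$ is one and the same on both spaces. Consequently a sequence $\{\ket{\phi_n}\}$ is bounded in $M$ if and only if it is bounded in $M'$, and a sequence converges in $M$ if and only if it converges (to the same limit) in $M'$. This upgrades the purely formal identity of underlying sets $M=M'$ to an identity of metric spaces, hence to an identity of the classes of bounded sequences and of convergent subsequences.

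With these two observations the equivalence is then immediate by unwinding the definitions. By Definition~\ref{compact}, $T$ is a bicomplex compact operator precisely when, for every bounded sequence $\{\ket{\phi_n}\}$ in $M$, the image sequence $\{T(\ket{\phi_n})\}$ admits a subsequence converging in $M$. Reading the identical sentence inside $M'$ and using that ``bounded,'' ``the image sequence $\{T(\ket{\phi_n})\}$,'' and ``convergent subsequence'' are unchanged under the identification, this is word for word the assertion that $T$ is a compact operator on the complex Hilbert space $M'$. Thus the two conditions are the same condition and both implications follow at once. The only point demanding care — the ``main obstacle,'' modest as it is — is precisely to verify that convergence with respect to the $\M(2)$-norm metric on $M$ coincides with convergence in $M'$; but this is exactly what the equality of the two norms, and the resulting shared topology, guarantees, so everything reduces to a direct rewriting of the definitions.
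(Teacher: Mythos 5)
Your proof is correct and is essentially the paper's own argument: the paper gives no separate proof at all, stating the theorem as a direct consequence of Lemma~\ref{operator} together with the preceding remark that $\{M,(\cdot,\cdot)\}$ and $\{M',(\cdot,\cdot)'\}$ share the same topology (indeed the same norm), and your write-up simply unwinds that remark into the definition-level check that bounded sequences and convergent subsequences coincide under the identification of $M$ with $M'$. Nothing is missing; if anything, the converse half of your linearity discussion is superfluous, since the theorem's hypothesis already takes $T$ to be bicomplex linear on $M$, so only the forward implication of Lemma~\ref{operator} is needed.
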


Therefore, many results are automatically true for bicomplex compact operators. We can cite the following as examples (see \cite{DM}):

\begin{corollary}
Bicomplex compact operators are bounbed.
\end{corollary}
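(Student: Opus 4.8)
The plan is to reduce the statement entirely to the classical theory of compact operators on complex Hilbert spaces, exploiting the dictionary between $M$ and $M'$ set up in the previous sections. The guiding observation is that boundedness and compactness are both purely metric/topological notions, and the whole point of the apparatus around Lemma \ref{operator} and Lemma \ref{adjoint} is that $M$ and $M'$ coincide as sets and carry the \emph{same} norm, hence the same topology. So almost everything needed is already in place.

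First I would take an arbitrary bicomplex compact operator $T$ on $M$. By Theorem \ref{compact}, $T$ is a compact operator on the associated complex Hilbert space $M'$. This is the one genuinely nontrivial input, and it has already been established, so the corollary inherits it at no cost.

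Next I would invoke the classical fact (see \cite{DM}) that every compact operator on a complex Hilbert space is bounded. If $T$ on $M'$ were unbounded, one could select a sequence $\{\ket{\phi_n}\}$ with $\big{\|}\ket{\phi_n}\big{\|}=1$ and $\big{\|}T(\ket{\phi_n})\big{\|}\to\infty$; then no subsequence of $\{T(\ket{\phi_n})\}$ could be Cauchy, contradicting compactness. Thus $T$ is bounded as an operator on $M'$.

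Finally I would transfer the conclusion back to $M$. Since the norm on $M$ agrees exactly with the norm on $M'$, as recorded in the proof of Lemma \ref{adjoint}, the supremum $\sup\{\big{\|}T(\ket{\phi})\big{\|}:\ket{\phi}\in M,\ \big{\|}\ket{\phi}\big{\|}\leq 1\}$ is the same whether computed in $M$ or in $M'$. Hence $T$ is bounded on $M$, and by Theorem \ref{bounded} also continuous. The only \textbf{obstacle} here is really bookkeeping: one must be certain that the set-theoretic identification of $M$ with $M'$ preserves the norm precisely, not merely up to equivalence. But that exact equality of norms is what was proved earlier, so no actual difficulty arises, and the corollary follows immediately.
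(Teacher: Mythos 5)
Your proof is correct and follows exactly the route the paper intends: the corollary is stated as an automatic consequence of Theorem \ref{compact} together with the classical fact (cited from \cite{DM}) that compact operators on complex Hilbert spaces are bounded, transferred back to $M$ via the equality of the norms on $M$ and $M'$. The paper gives no further argument, so your slightly more detailed write-up (including the sketch of the classical unboundedness contradiction) matches and even fleshes out its reasoning.
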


\begin{corollary}
The adjoint of a bicomplex compact operator is compact.
\end{corollary}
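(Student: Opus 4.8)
The plan is to reduce the statement to the classical theorem on the associated complex Hilbert space $M'$, in exactly the same spirit as the boundedness of the adjoint was handled earlier. First I would take $T$ to be a bicomplex compact operator on $M$ and apply Theorem \ref{compact} to conclude that $T$ is a compact operator on the complex Hilbert space $M'$.

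Next I would recall that the bicomplex adjoint $T^*$ on $M$ coincides, as a map, with the Hilbert-space adjoint of $T$ on $M'$; this is precisely the identification established in Lemma \ref{adjoint}, where the adjoint computed from $\scalarmath{\cdot}{\cdot}$ and the adjoint computed from $\scalarmath{\cdot}{\cdot}'$ are shown to be the same operator. Thus the object I wish to prove compact, namely $T^*$, is nothing other than the adjoint of the compact operator $T$ on the complex Hilbert space $M'$.

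At this point I would invoke the classical result that the adjoint of a compact operator on a complex Hilbert space is again compact (see \cite{DM}), which yields compactness of $T^*$ on $M'$. Finally I would apply the converse direction of Theorem \ref{compact}: since $T^*$ is compact on $M'$, it is a bicomplex compact operator on $M$, which is the desired conclusion.

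This is a pure transfer argument, so I do not expect a genuine obstacle once the dictionary between $M$ and $M'$ is in force. The only delicate point is to be certain that the classical theorem is being applied to the correct operator: one must know that the bicomplex adjoint and the $M'$-adjoint agree, for otherwise compactness of the $M'$-adjoint would tell us nothing about $T^*$. Lemma \ref{adjoint} supplies exactly this identification, so the step is justified rather than merely plausible, and the remaining bookkeeping is entirely routine.
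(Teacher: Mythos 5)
Your proof is correct and is exactly the argument the paper intends: the paper states this corollary without explicit proof as an ``automatic'' consequence of Theorem \ref{compact} together with the classical result in \cite{DM}, and your write-up simply makes that transfer explicit, including the one genuinely necessary ingredient (Lemma \ref{adjoint}, identifying the bicomplex adjoint with the $M'$-adjoint) that the paper leaves implicit.
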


\noindent However, using Theorem \ref{compact}, it is not possible to obtain a bicomplex version of the Spectral Theorem for Self-Adjoint Compact Operators.
In this way, we obtain the standard Spectral Theorem for $M'$ where the eigenvalues are complex. To obtain a bicompex version, we need to come back
to the idempotent decomposition with the following lemma.

\begin{lemma}
Let $T$ be a bicomplex linear operator on a bicomplex Hilbert space $M$. Then $T$ is a bicomplex compact operator if and only if
$T_\mk$ is compact on $V_k$ for $k=1,2$.
\label{compact_id}
\end{lemma}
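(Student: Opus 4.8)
The plan is to exploit the idempotent decomposition of $T$ established in the earlier lemmas, which writes $T(\ket{\phi}) = T_\mo(\ket{\phi_\mo}) + T_\mt(\ket{\phi_\mt})$ with each $T_\mk$ a complex-linear operator on the complex Hilbert space $V_k$. The key structural fact I would rely on is that the projections $\e{k}$ are continuous on $M$ (equivalently on $M'$), so that convergence of a sequence in $M$ is equivalent to convergence of both its $\ee$- and $\eee$-components in $V_1$ and $V_2$ respectively. This follows from the explicit form of the $\M(2)$-norm in \eqref{T-norm}, which expresses $\||\ket{\phi}\||$ in terms of $|\ket{\phi_\mo}|_1$ and $|\ket{\phi_\mt}|_2$. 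Once this equivalence is in place, the two directions of the lemma are essentially bookkeeping.

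First I would prove the forward direction ($T$ compact $\Rightarrow$ each $T_\mk$ compact). Given a bounded sequence $\{\ket{v_n}\}$ in $V_1$, I would view it as a sequence in $M$ (regarding $V_1 \subset M$), and using \eqref{T-norm} argue that it is bounded in $M$ as well since its second idempotent component vanishes. By compactness of $T$ on $M$, the sequence $\{T(\ket{v_n})\}$ has a subsequence converging in $M$. Applying the continuous projection $\e{1}$ (which on these elements acts as $T_\mo$, since $T_\mo\ket{v_n} = \e{1}T\ket{v_n}$), I would extract a convergent subsequence of $\{T_\mo(\ket{v_n})\}$ in $V_1$. The same argument with $\e{2}$ handles $T_\mt$ on $V_2$.

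Conversely, suppose $T_\mo$ and $T_\mt$ are compact on $V_1$ and $V_2$. Given a bounded sequence $\{\ket{\phi_n}\}$ in $M$, I would split it as $\ket{\phi_n} = \ket{\phi_{n\mo}} + \ket{\phi_{n\mt}}$ and observe via \eqref{T-norm} that each component sequence is bounded in the respective $V_k$. Compactness of $T_\mo$ gives a convergent subsequence of $\{T_\mo(\ket{\phi_{n\mo}})\}$; passing to a further subsequence (the standard diagonal-type refinement over two indices), compactness of $T_\mt$ gives simultaneous convergence of $\{T_\mt(\ket{\phi_{n\mt}})\}$. Since both idempotent components converge, the formula for the norm forces $\{T(\ket{\phi_n})\}$ to converge in $M$ along this common subsequence.

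The main obstacle I anticipate is purely technical rather than conceptual: ensuring that a single subsequence works for both components, which is handled by taking a subsequence of a subsequence. A secondary point requiring care is the identification $\e{k} T(\ket{\phi}) = T_\mk(\ket{\phi_\mk})$ when restricting to elements of $V_k$, and the fact that boundedness in $M$ of a sequence lying entirely in one $V_k$ coincides with boundedness in that $V_k$; both follow directly from the structure of the $\M(2)$-norm in \eqref{T-norm}. I would also note that this lemma could alternatively be derived by combining Theorem \ref{compact} with the known fact that a complex operator is compact on $M' = V_1 \oplus V_2$ if and only if its restriction to each summand is compact, but the direct idempotent argument is cleaner and self-contained.
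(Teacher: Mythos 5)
Your proposal is correct and follows essentially the same route as the paper's proof: both directions rest on the norm identity $\big{|}\ket{\psi_\mk}\big{|}_k = \sqrt{2}\,\big{\|}\ket{\psi_\mk}\big{\|}$ from \eqref{T-norm} to transfer boundedness and convergence between $V_k$ and $M$, with the identification of $T$ restricted to $V_k$ with $T_\mk$ in the forward direction and the subsequence-of-a-subsequence refinement in the converse, exactly as in the paper. The only cosmetic difference is that you invoke the continuous projection $\e{k}$ after extracting the subsequence, whereas the paper uses directly that $T(\ket{\phi_{n\mk}}) = T_\mk(\ket{\phi_{n\mk}})$ for elements of $V_k$; these amount to the same observation.
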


\begin{proof}
Let $\{\ket{\phi_{n\mk}}\}$ be an arbitrary bounded sequence in $\{V_k,\scalarmath{\cdot}{\cdot}_{\widehat{k}}\}$ for $k=1,2$.
From the definition of the $\M(2)$-norm (see \cite{GMR}) we have that
\begin{equation}
\big{|} \ket{\psi_\mk} \big{|}_{k}
= \sqrt{2} \big{\|} \ket{\psi_\mk} \big{\|} , \;\;
\forall \ket{\psi_\mk} \in V_k . \notag
\end{equation}
Hence, $\ket{\phi_{n\mk}}$ is also a bounded sequence in $\{M,\scalarmath{\cdot}{\cdot}\}$ for $k=1,2$.
Now, using the definition of a bicomplex compact operator, we have that the sequence $\{T(\ket{\phi_{n\mk}})\}$ contains
a convergent subsequence for $k=1,2$. Hence,
$T_\mk$ is compact on $V_k$ since $T_\mk(\ket{\phi_n})=T(\ket{\phi_{n\mk}})$ for $k=1,2$.
Conversely, if $\{\ket{\phi_n}\}$ is an arbitrary bounded sequence in $M$, then $\{\ket{\phi_{n\mk}}\}$ is also
a bounded sequence in $\{V_k,\scalarmath{\cdot}{\cdot}_{\widehat{k}}\}$ for $k=1,2$ since
\begin{equation}
\big{|} \ket{\psi_\mk} \big{|}_{k}
\leq \sqrt{2} \big{\|} \ket{\psi} \big{\|} , \;\;
\forall \ket{\psi_\mk} \in V_k. \notag
\end{equation}
Therefore, if we apply successively the definition of compact operators for $k=1,2$,
we find a convergent subsequence $\{T_\mk(\ket{\phi_{n_l\mk}})\}$ for $k=1,2$.
Hence, since $T(\ket{\phi_{n_l}})=T_\mo(\ket{\phi_{n_l\mo}})+T_\mt(\ket{\phi_{n_l\mt}})$,
the sequence $\{T(\ket{\phi_n})\}$ contains a convergent subsequence.
\end{proof}

We are now ready to prove the infinite dimensional bicomplex spectral decomposition theorem.

\begin{theorem}
Let $T$ be a self-adjoint, bicomplex compact operator on a separable infinite dimensional bicomplex Hilbert space $M$.
Then there exist in $M$ an orthonormal (Schauder) $\M(2)$-basis $\{ \ket{\psi_n} \}$ consisting of eigenkets for $T$.
Moreover, for every $\ket{\phi}\in M$,
\begin{equation}
T(\ket{\phi})=\sum_{n=1}^{\infty}\lambda_n \scalarmath{\ket{\psi_n}}{\ket{\phi}}\ket{\psi_n}
\end{equation}
or using the bra-ket notation,
\begin{equation}
T=\sum_{n=1}^{\infty}\lambda_n \ket{\psi_n}\bra{\psi_n}
\end{equation}
where $\lambda_n$ is the eigenvalue corresponding to $\ket{\psi_n}$.
\end{theorem}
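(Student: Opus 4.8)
The plan is to push the entire problem through the idempotent decomposition $M=V_1\oplus V_2$, reduce it to two copies of the classical spectral theorem on the separable complex Hilbert spaces $V_1$ and $V_2$, and then reassemble the two complex eigenbases into a single orthonormal Schauder $\M(2)$-basis carrying bicomplex eigenvalues. First I would write $T$ in its idempotent components, $T(\ket{\phi})=T_\mo(\ket{\phi_\mo})+T_\mt(\ket{\phi_\mt})$, as in \eqref{T}. By Lemma~\ref{compact_id} the compactness of $T$ is equivalent to the compactness of each $T_\mk$ on $V_k$. From the idempotent description of the adjoint (the lemma identifying $T^{*}$ on $M$ with the componentwise adjoints $T_\mk^{*}$ on $V_k$), self-adjointness $T=T^{*}$ forces $T_\mk=T_\mk^{*}$ for $k=1,2$, since the decomposition into $V_1\oplus V_2$ is unique. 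Thus each $T_\mk$ is a self-adjoint compact operator on $V_k$, and both $V_1$ and $V_2$ are separable infinite-dimensional complex Hilbert spaces of equal dimension (being the $\e{k}$-projections of any $\M(2)$-basis of $M$); this is what will let me index the two eigenbases by the common set $\mathbb{N}$.

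Next I would invoke the classical spectral theorem for self-adjoint compact operators (see \cite{DM}) on each $V_k$: there is an orthonormal basis $\{\ket{\psi_n^{(k)}}\}_{n\ge 1}$ of $V_k$ with $T_\mk\ket{\psi_n^{(k)}}=\lambda_n^{(k)}\ket{\psi_n^{(k)}}$ and $\lambda_n^{(k)}\in\mathbb{R}$, where I incorporate an orthonormal basis of $\ker T_\mk$ (eigenvalue $0$) into the list so as to exhaust $V_k$. I then define the bicomplex data
\begin{equation}
\ket{\psi_n}:=\ee\ket{\psi_n^{(1)}}+\eee\ket{\psi_n^{(2)}},\qquad \lambda_n:=\ee\lambda_n^{(1)}+\eee\lambda_n^{(2)}, \notag
\end{equation}
where $\lambda_n$ is a hyperbolic number. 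Using $\e{k}^2=\e{k}$ and $\ee\eee=0$ together with $\ket{\psi_n^{(k)}}=\e{k}\ket{\psi_n^{(k)}}$, a direct computation gives $T(\ket{\psi_n})=\lambda_n^{(1)}\ket{\psi_n^{(1)}}+\lambda_n^{(2)}\ket{\psi_n^{(2)}}=\lambda_n\ket{\psi_n}$, so each $\ket{\psi_n}$ is a bicomplex eigenket with bicomplex eigenvalue $\lambda_n$.

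Finally I would verify the three remaining claims. Orthonormality is immediate from the idempotent form \eqref{2.36} of the scalar product: the $\widehat{1}$- and $\widehat{2}$-parts of $\scalarmath{\ket{\psi_n}}{\ket{\psi_m}}$ are $\braket{\psi_n^{(1)}}{\psi_m^{(1)}}_\hh=\delta_{nm}$ and $\braket{\psi_n^{(2)}}{\psi_m^{(2)}}_\hhh=\delta_{nm}$, whence $\scalarmath{\ket{\psi_n}}{\ket{\psi_m}}=\delta_{nm}(\ee+\eee)=\delta_{nm}$. For the Schauder property, given $\ket{\phi}\in M$ I expand each component $\ket{\phi_\mk}=\sum_n c_n^{(k)}\ket{\psi_n^{(k)}}$ in $V_k$, set $w_n:=\ee c_n^{(1)}+\eee c_n^{(2)}$, and check $w_n\ket{\psi_n}=c_n^{(1)}\ket{\psi_n^{(1)}}+c_n^{(2)}\ket{\psi_n^{(2)}}$, so that $\ket{\phi}=\sum_n w_n\ket{\psi_n}$ with $w_n=\scalarmath{\ket{\psi_n}}{\ket{\phi}}$; convergence in the $\M(2)$-norm is inherited from the two complex series through \eqref{T-norm}. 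Applying $T$ term by term, legitimate since $T$ is bounded (compact operators are bounded) and hence continuous, yields $T(\ket{\phi})=\sum_n w_n\lambda_n\ket{\psi_n}=\sum_n\lambda_n\scalarmath{\ket{\psi_n}}{\ket{\phi}}\ket{\psi_n}$, and the bra-ket form follows.

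The computations above are all routine once the machinery is in place, so the main obstacle is conceptual rather than technical: ensuring that the gluing genuinely produces an orthonormal \emph{Schauder $\M(2)$-basis} with \emph{bicomplex} eigenvalues, rather than merely a complex eigenbasis of $M'$. Concretely, this rests on (a) both $V_1$ and $V_2$ being countably infinite dimensional so that the two enumerations can be paired index by index (the pairing is arbitrary but harmless), and (b) the $\M(2)$-norm controlling the bicomplex series through its two complex components, which is exactly what guarantees convergence and uniqueness of the coefficients $w_n$. This hyperbolic nature of the $\lambda_n$ is the whole point, and it is why the componentwise spectral theorem applied directly to $M'$, which only delivers complex eigenvalues, does not suffice.
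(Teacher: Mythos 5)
Your proposal is correct and follows essentially the same route as the paper: reduce via the idempotent decomposition and Lemma~\ref{compact_id} to two self-adjoint compact operators $T_\mk$ on the separable complex Hilbert spaces $V_k$, apply the classical spectral theorem there, and glue the resulting eigenbases into $\ket{\psi_n}=\ket{\psi_{n\mo}}+\ket{\psi_{n\mt}}$ with hyperbolic eigenvalues $\lambda_n=\ee\lambda_n^{(1)}+\eee\lambda_n^{(2)}$, concluding by continuity of $T$. In fact your write-up supplies details the paper leaves implicit (the explicit verification that $\ket{\psi_n}$ is an eigenket, the orthonormality computation, and the Schauder-basis argument with $w_n=\scalarmath{\ket{\psi_n}}{\ket{\phi}}$), so it is a faithful, slightly more complete rendering of the same proof.
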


\begin{proof}
By Lemma \ref{compact_id}, the projection operator $T_\mk$ is compact for $k=1,2$. Moreover, using \eqref{T} and \eqref{T^*}, we have that
$T_\mk$ is also self-adjoint for $k=1,2$. Hence, the classical Spectral Decomposition Theorem holds for $T_\mk$ on $V_k$
since $V_k$ is a separable infinite dimensional Hilbert space for $k=1,2$ (see \cite{RKC}). From this result, we get the orthonormal sets
$\{ \ket{\psi_{n\mo}} \}$ and $\{ \ket{\psi_{n\mt}} \}$ of eigenvectors of $T_\mo$ and $T_\mt$ respectively.
Now, let
\begin{equation}
\ket{\psi_n}:=\ket{\psi_{n\mo}}+\ket{\psi_{n\mt}}\mbox{ }\forall n\in\mathbb{N}^{*}.
\end{equation}
It is easy to show that $\{ \ket{\psi_n} \}$ an orthonormal $\M(2)$-basis.
Now, let $\lambda_n$ be the eigenvalue of $T$ associated with $\ket{\psi_n}$ so that
\begin{equation}
T(\ket{\psi_n})=\lambda_n\ket{\psi_n}.
\label{EQN}
\end{equation}
Since $\forall p\in\mathbb{N}^{*}$ we have that,
\begin{align}
\[\sum_{n=1}^{\infty}\lambda_n\ket{\psi_n}\bra{\psi_n}\]\ket{\psi_p}&=\sum_{n=1}^{\infty}\lambda_n\ket{\psi_n}\braket{\psi_n}{\psi_p}\\
&=\sum_{n=1}^{\infty}\lambda_n\ket{\psi_n}\delta_{np}\\
&=\lambda_p\ket{\psi_p}\\
&=T(\ket{\psi_p})\ \mbox{by }\eqref{EQN}
\end{align}
then, by linearity and continuity of the operator $T$, we obtain the following conclusion:
$$T=\sum_{n=1}^{\infty}\lambda_n \ket{\psi_n}\bra{\psi_n}.$$
\end{proof}

\section{Acknowledgments}

DR is grateful to the Natural Sciences and Engineering Research Council of Canada for financial support. The authors are grateful to Dr Romesh Kumar for the discussions with DR at the very initial stages of this work.

\bibliographystyle{amsplain}

\end{document}